\documentclass[12pt]{article}
\usepackage{amsmath,amssymb,amsthm,comment,cite}

\newtheorem{theorem}{Theorem}[section]
\newtheorem{thmy}{Theorem}
\newtheorem{lemma}[theorem]{Lemma}

\def\barr{\begin{array}}
\def\earr{\end{array}}

\title{A note on finite groups with few conjugacy classes of subgroups}
\author{Marius T\u arn\u auceanu}
\date{April 29, 2026}

\begin{document}

\maketitle

\begin{abstract}
In this note, we give some conditions of solvability of a finite group $G$ based on its number of conjugacy classes of subgroups $Con(G)$. We classify all finite groups $G$ with $Con(G)\leq 5$. A new characterization of $A_5$ is also given. 
\end{abstract}

{\small
\noindent
{\bf MSC2020\,:} Primary 20D60; Secondary 20D10, 20F16, 20F17.

\noindent
{\bf Key words\,:} conjugacy classes of subgroups, solvable groups.}

\section{Introduction}

In the last years, there has been a growing interest in detecting structural properties of a finite group $G$ through the study of some functions related to elements or to subgroups of $G$ (see e.g. \cite{9}). The starting point for our discussion is given by the paper \cite{7}, which provides some conditions of (super)-solvability and nilpotency of $G$ based on its number of subgroups $Sub(G)$. It also gives a classification of finite groups with few subgroups, generalizing the works by Betz and Nash \cite{4,5}.

In what follows, we will deal with conjugacy classes of subgroups of a finite group $G$. Their total number $Con(G)$, as well as their number for certain types of subgroups, such as maximal subgroups, non-normal subgroups or non-cyclic subgroups, have been studied in many recent papers (see e.g. \cite{6}, \cite{10,11,12,13} and \cite{15}). In our note, we prove first finite groups with few conjugacy classes of subgroups are solvable, more precisely:

\begin{theorem}
Let $G$ be a finite group. If $Con(G)<9$, then $G$ is solvable.
\end{theorem}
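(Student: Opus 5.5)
We argue by contradiction. Let $G$ be a non-solvable group with $Con(G)<9$ and $|G|$ minimal among such groups. The first step is monotonicity under quotients: if $N\trianglelefteq G$, then conjugacy classes of subgroups of $G/N$ correspond bijectively to conjugacy classes of subgroups of $G$ containing $N$. Hence $Con(G/N)\le Con(G)$, with strict inequality when $N\neq 1$, because the trivial subgroup lies in a class not counted by $G/N$.

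\textbf{Reduction to a simple group.} Since a group with a solvable normal subgroup and solvable quotient is solvable, minimality forces every proper nontrivial quotient $G/N$ to be non-solvable (as $Con(G/N)<9$). It then forces $N$ itself to be non-solvable, for otherwise $G$ would be solvable. In particular the solvable radical of $G$ is trivial, and $G$ has a nonabelian minimal normal subgroup $M=T^k$ with $T$ nonabelian simple.

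To handle this situation, we count classes of subgroups of $G$ that lie inside $M$. Distinct orders of subgroups give distinct $G$-classes. By Feit--Thompson, $T$ has even order, and by Burnside's $p^aq^b$ theorem, $|T|$ has at least three prime divisors $p<q<r$. So $M$ contains subgroups of orders $1,2,p,q,r$, a Sylow $2$-subgroup of order at least $4$ (it is not cyclic), and the subgroup $M$ itself. Adding $G$, if $G\neq M$, together with a subgroup of order $4$ and one of order $2q$ or $pq$ should already reach $9$. So the main work is the case $G=T$ simple.

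\textbf{The simple case.} Here we need at least $9$ classes of subgroups in every nonabelian simple $T$. We produce subgroups of pairwise distinct orders (or distinct isomorphism types): $1$, $\mathbb Z_2$, a Klein four-group or $\mathbb Z_4$ (the Sylow $2$-subgroup is noncyclic of order at least $4$, so it contains both an element of order $2$ and a subgroup of order $4$), $\mathbb Z_p$ and $\mathbb Z_q$ for two odd primes dividing $|T|$, a Sylow $2$-subgroup if it has order at least $8$, a normalizer $N_T(P)$ of an odd Sylow subgroup (non-nilpotent by Burnside's transfer theorem, hence different from the previous ones), a maximal subgroup, and $T$ itself. A cleaner route is the following. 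Every minimal simple group contains a minimal simple section, and Thompson's classification gives the list $PSL_2(q)$, $Sz(2^p)$, $PSL_3(3)$. It is then easier to count directly that $A_5$ has exactly $9$ classes of subgroups: $1, C_2, C_3, V_4, C_5, S_3, D_{10}, A_4, A_5$. Every other group on the list has more, since its subgroup orders include at least $9$ distinct values. Subgroups of $T$ that are not conjugate in a subgroup $H\le T$ may become conjugate in $T$, but subgroups of distinct orders never do. So it suffices to find $9$ distinct subgroup orders in $T$ when $T\not\cong A_5$, and for $A_5$ to check the $9$ classes above. A general simple group contains a minimal simple subgroup $H$, and $H$ already has subgroups of $\geq 8$ distinct orders. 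Together with $T\ne H$ itself this gives $9$ orders.

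\textbf{Main obstacle.} The hardest step is the counting in the minimal simple families. For $PSL_2(q)$ we will use Dickson's list: the orders $1,2,p,q\pm1$-divisors, dihedral subgroups, the Borel subgroup and $A_4$ give at least $9$ distinct orders, with $q=4,5$ (that is, $A_5$) as the boundary case. For $Sz(q)$ and $PSL_3(3)$ we will simply exhibit subgroup orders. The final statement then follows, since $Con<9$ rules out every non-solvable group.
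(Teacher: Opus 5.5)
\textbf{There is a genuine gap.} Your underlying idea is viable, and it is quite different from the paper's: bound $Con(G)$ below by $|\pi_s(G)|$, then reduce to Thompson's list of minimal simple groups. As written, though, the argument does not close.

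\begin{itemize}
\item The reduction is stated backwards. Minimality makes every proper quotient $G/N$ \emph{solvable}, since $Con(G/N)<Con(G)<9$. That is what forces $N$ to be non-solvable and the solvable radical to be trivial.
\item The case $G\neq M$ is not proved. The orders you list are $1,2,4,q,r,2q,|M|,|G|$. That is only $8$ when a Sylow $2$-subgroup of $M$ has order $4$ (e.g.\ $G=S_5$, $M=A_5$). Also, the existence of a subgroup of order $2q$ in $T$ is asserted without justification; it needs something like Baer--Suzuki.
\item In the simple case, the step ``a general simple group contains a minimal simple subgroup $H$'' does not follow from Thompson's classification. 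Elementary arguments only give a minimal simple \emph{section}. The subgroup version is a separate, classification-dependent theorem.
\item The decisive verification, that each group on Thompson's list has enough distinct subgroup orders, is only announced, not carried out.
\end{itemize}

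One observation repairs all of this at once: $|\pi_s|$ does not increase under passing to sections. Let $H\le G$ be a non-solvable subgroup of minimal order and $R$ its solvable radical. Then $S=H/R$ is minimal simple. Subgroups of $S$ of distinct orders have preimages of distinct orders, so $Con(G)\ge|\pi_s(G)|\ge|\pi_s(H)|\ge|\pi_s(S)|$. The paper uses exactly this inequality in Subcase 1.2 of Theorem 1.3.

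It then suffices to check that each of ${\rm PSL}(2,2^p)$, ${\rm PSL}(2,3^p)$, ${\rm PSL}(2,p)$ ($p>3$ prime, $p\equiv\pm 2\pmod 5$), ${\rm Sz}(2^p)$ and ${\rm PSL}(3,3)$ has at least $9$ subgroup orders. $A_5$ has exactly $9$ (namely $1,2,3,4,5,6,10,12,60$) and ${\rm PSL}(2,7)$ has $11$. The remaining families follow routinely from their known subgroup structure. With this, you need no minimal counterexample, no split into $G=M$ versus $G\neq M$, and no extra ``$+1$'' from $T$.

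Compared with the paper, the repaired route gives the stronger conclusion $|\pi_s(G)|\ge 9$ for every non-solvable $G$. It relies on Thompson's classification of minimal simple groups plus a family-by-family check. The paper's proof never looks at simple groups. It uses Theorem A a) and d) to rule out a maximal Sylow subgroup, then counts three Sylow classes, three classes of maximal subgroups, $[1]$ and $[G]$. Equality then forces $|G|$ to be square-free, hence $G$ solvable.
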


It is easy to see that the smallest number of conjugacy classes of subgroups of a non-solvable group is $9$ and it is attained for the alternating group of five symbols. Moreover, $A_5$ is the unique non-solvable group $G$ with $Con(G)=9$.

\begin{theorem}
Let $G$ be a finite non-solvable group. If $Con(G)=9$, then $G\cong A_5$.
\end{theorem}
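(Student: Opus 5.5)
We argue by passing to a minimal simple section of $G$ and counting conjugacy classes of subgroups. The basic tool is monotonicity of $Con$ under sections. If $N\trianglelefteq G$, then $Con(G/N)\le Con(G)$, since subgroups of $G/N$ correspond to subgroups of $G$ containing $N$ and conjugacy is preserved. Moreover, if $H\le G$ is not normal, or if $N\neq 1$, this inequality becomes strict in a controllable way. For $N\neq 1$ we have $Con(G)\ge Con(G/N)+1$, because the trivial subgroup is not of the form $K$ with $N\le K$.

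Now let $G$ be non-solvable with $Con(G)=9$. We proceed in four steps.

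\textbf{Step 1.} Take a non-abelian composition factor $S$ of $G$. We first check that every non-abelian simple group $S\not\cong A_5$ has $Con(S)\ge 10$. By Theorem 1.1 it is enough to examine minimal simple groups, since any non-abelian simple group has a minimal simple section. Thompson's list gives $PSL(2,q)$ for suitable $q$, $Sz(2^p)$ and $PSL(3,3)$. For each of these we exhibit at least ten pairwise non-conjugate subgroups. The candidates are: $1$, cyclic subgroups of each prime order, Sylow subgroups, Borel subgroups, dihedral normalizers of tori, and $S$ itself. For example, $PSL(2,7)$ has at least $15$ classes and $A_6$ has more. For a general non-abelian simple $S$ containing a minimal simple section $T\ne A_5$, we get at least $Con(T)\ge 10$ classes. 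To justify this, take the subgroup $H\le S$ with $T=H/K$ and lift: $Con(S)\ge Con(H)$ fails in general, so instead we argue directly. The number of distinct orders of subgroups of $S$ already exceeds $9$ once $|S|$ has enough divisors realized by subgroups. We expect to handle this by the known fact that for a simple group $S\neq A_5$ with a minimal simple section, either $S$ contains a proper non-solvable subgroup, or $S$ is minimal simple. In the former case the subgroup lattice contributes $1$, $S$, and the classes of that proper subgroup, which is at least $9$ by Theorem 1.1 applied inductively. Together with at least one further class, this gives $\ge 10$.

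\textbf{Step 2.} This step handles $G$ simple, where Step 1 immediately yields $G\cong A_5$.

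\textbf{Step 3.} Suppose $G$ is not simple, and let $N$ be a minimal normal subgroup.
\begin{itemize}
\item If $G/N$ is non-solvable, then $Con(G/N)\ge 9$ by Theorem 1.1. Hence $Con(G)\ge 10$, a contradiction.
\item If $G/N$ is solvable, then $N$ is non-solvable, so $N\cong S^k$. Here $N$, $1$, and the classes of subgroups inside $N$ that remain non-conjugate in $G$ give many classes. Since $G$-conjugacy can fuse $N$-classes only via $\mathrm{Aut}$ orbits, we count orders instead. $N$ has subgroups of at least $9$ distinct orders unless $N\cong A_5$. Even for $A_5$ the orders are $1,2,3,4,5,6,10,12,60$, already nine. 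Any $G\neq N$ then has the extra class $G$, giving $Con(G)\ge 10$.
\end{itemize}

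\textbf{Step 4.} This step makes the order-counting rigorous and is the main obstacle. Showing that every non-abelian simple group other than $A_5$ has subgroups of at least $10$ distinct orders would collapse Steps 1–3 into a single argument. We plan to verify this on Thompson's minimal simple list and then pass it up to all simple groups via a proper minimal simple section. Lifting from a section to a subgroup preserves the distinctness of orders only after multiplying by $|K|$, so the lift remains injective. Since $S\ne$ the section's lift, we obtain at least one more order.
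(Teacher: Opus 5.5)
\textbf{There is a gap.} Your argument rests on one lemma, used in Steps 2 and 3: every non-abelian simple group has subgroups of at least nine distinct orders, and of at least ten unless it is $A_5$. This lemma is never proved. Step 4 only announces a plan to check Thompson's list.

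The argument you do give in Step 1 fails. For a proper non-solvable $H<S$, Theorem 1.1 gives nine $H$-conjugacy classes of subgroups, but these may fuse under $S$-conjugation. That is exactly why, as you note, $Con(S)\ge Con(H)$ can fail, so you do not get nine $S$-classes. What would survive fusion is $|\pi_s(H)|\ge 9$, and Theorem 1.1 says nothing about $\pi_s$. Likewise, Theorem 1.1 does not justify ``it is enough to examine minimal simple groups.''

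The check on minimal simple groups is also not automatic with your candidate list. For ${\rm PSL}(2,7)$, the trivial group, the cyclic subgroups of prime order, the Sylow subgroups, the Borel subgroup, the dihedral normalizers of the tori and the group itself realize only the eight orders $1,2,3,6,7,8,21,168$, and only eight conjugacy classes. You also need the torus $\mathbb{Z}_4$, $A_4$ and $S_4$ to reach $|\pi_s({\rm PSL}(2,7))|=11$.

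\textbf{How to close it.} The route is sound and genuinely different from the paper's. First prove, by a case check of subgroup orders on ${\rm PSL}(2,2^p)$, ${\rm PSL}(2,3^p)$, ${\rm PSL}(2,p)$, ${\rm Sz}(2^p)$ and ${\rm PSL}(3,3)$, that every minimal simple group $T$ has $|\pi_s(T)|\ge 9$, with equality only for $T\cong A_5$. Then Steps 1--3 collapse into one argument:
\begin{itemize}
\item[(i)] Take a minimal non-solvable subgroup $L\le G$ and its solvable radical $K$; then $T=L/K$ is minimal simple.
\item[(ii)] The map $m\mapsto |K|m$ embeds $\pi_s(T)$ into $\pi_s(G)$.
\item[(iii)] You gain the extra order $1$ if $K\ne 1$, and the extra order $|G|$ if $L<G$.
\end{itemize}
Hence $Con(G)\ge|\pi_s(G)|\ge 10$ unless $G=L\cong A_5$.

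\textbf{Comparison with the paper.} The paper never looks at minimal simple groups. It counts classes of Sylow and maximal subgroups to force $k(G)=3$. It then applies Belonogov's classification (Theorem A, b)) to get $G/\Phi(G)\cong{\rm PSL}(2,7)$ or ${\rm PSL}(2,2^p)$. It finishes with Lemma 2.2 (Zsigmondy), the values $Con({\rm PSL}(2,7))=15$, $Con({\rm PSL}(2,8))=12$, $Con(A_5)=9$, and Lemma 2.1, b). Your completed route replaces Belonogov's theorem by Thompson's classification of minimal simple groups. It gives the stronger bound $|\pi_s(G)|\ge 10$, and Theorem 1.1 as a by-product. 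As submitted, however, its decisive step is missing.
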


Then a similar conclusion to that of Theorem 1.1 is obtained for $Con(G)=10$ or $Con(G)=11$.

\begin{theorem}
Let $G$ be a finite group. If $Con(G)\in\{10, 11\}$, then $G$ is solvable.
\end{theorem}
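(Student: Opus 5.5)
We argue by contradiction and take a non-solvable group $G$ with $Con(G)\in\{10,11\}$, chosen of minimal order. Two standard facts drive the proof. If $H\le G$, then distinct $G$-classes of subgroups of $H$ may merge, so there is no direct monotonicity in general. However, for a quotient $G/N$ the correspondence theorem gives $Con(G/N)\le Con(G)-c$, where $c$ counts the classes of subgroups of $G$ not containing $N$. Also, every non-solvable group has a non-abelian simple section.

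The plan is to reduce to the situation where $G$ has a non-abelian simple quotient or a non-abelian simple normal subgroup, and then count classes directly.

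\emph{Step 1 (reduction via quotients).} Let $N$ be the solvable radical of $G$. If $N\neq 1$, then $G/N$ is non-solvable. The classes of $1$, of subgroups of prime order inside $N$, and of $N$ itself are not counted in $G/N$, at least in the sense of containing $N$, so $Con(G/N)\le Con(G)-2\le 9$. By Theorems 1.1 and 1.2 this forces $G/N\cong A_5$ and $Con(G)\ge 9+2=11$, with equality only when all subgroups of $G$ either contain $N$ or are $1$ or one further class. I would then exclude this case explicitly. A Sylow $2$-subgroup of $G$ not contained in $N$ yields extra classes, namely the preimages of Klein four subgroups intersected with complements, elements of order $5$ mapping nontrivially, and so on. This gives at least $3$ extra classes beyond those of $G/N$, so $Con(G)\ge 12$.

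\emph{Step 2 (trivial solvable radical).} Now assume $N=1$. Then the socle of $G$ is a direct product of non-abelian simple groups $T_1\times\cdots\times T_k$, and $G$ embeds in $\mathrm{Aut}$ of it. If $k\ge 2$, or if $G\ne T$ with $T=T_1$, I would count classes containing elements of orders $2,3,5$, the groups $T$ and $G$, and subgroups such as $C_2^2$ and Sylow normalizers. This should show $Con(G)\ge 12$; for instance $S_5$ already has $19$ classes.

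\emph{Step 3 (simple case).} Assume finally that $G$ is simple non-abelian. Every simple group contains a minimal simple group, that is, some $PSL(2,q)$, $Sz(2^p)$ or $PSL(3,3)$. The key count is to exhibit in $G$ at least $12$ classes. These are the trivial subgroup and $G$; cyclic subgroups of each prime order dividing $|G|$, of which there are at least $3$ primes; a Klein four group or a $C_4$; a Sylow $2$-subgroup and its normalizer; dihedral subgroups $D_6$ and $D_{10}$ when $3$ and $5$ divide $|G|$; and maximal subgroups from at least two classes. For $G\neq A_5$ one gets extra classes, for example elements of order $4$, $6$, $7$, or a fourth prime. $A_5$ itself has exactly $9$ classes, so it is excluded.

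The main obstacle is Step 3: producing a uniform lower bound of $12$ for every simple group other than $A_5$. I would first try using the classification of minimal simple groups together with Step 1. The idea is that if $G$ properly contains a minimal simple $M$, then the classes inside $M$ plus $G$ itself, plus a class distinguishing $G$ from $M$ (a new prime or element order, by Burnside's $p^aq^b$ theorem and Sylow arguments), give at least $Con(M)-(\text{fusion})+2$. For the minimal simple groups themselves, I would check the list $PSL(2,q)$, $Sz(q)$, $PSL(3,3)$ directly using Dickson's subgroup list; each one other than $A_5$ has at least $12$ classes.
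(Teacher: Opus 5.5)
Your Step 1 is correct in substance, but the bookkeeping needs fixing. $N$ itself \emph{is} counted in $G/N$, since it corresponds to the trivial subgroup. The clean count is that $[1]$ and the classes of subgroups of order $q$, for at least two primes $q\in\{2,3,5\}$ (all three if $|N|$ is not prime), do not contain $N$. So $G/N\cong A_5$ gives $Con(G)\ge 9+3=12$; this is the device the paper uses with $\Phi(G)$ at the end of Case~2.

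The genuine gap is that Steps 2 and 3, which carry the whole weight of the theorem, are not arguments. In Step 3 your list of ``at least 12 classes'' is never shown to consist of pairwise non-conjugate subgroups. In $A_5$ it collapses to $9$: the Sylow $2$-subgroup is the Klein group, and its normalizer $A_4$ together with $D_6$, $D_{10}$ are exactly the maximal subgroups. Several entries (e.g.\ $D_6$, $D_{10}$) exist only when $3,5\mid |G|$, which fails for ${\rm PSL}(2,7)$, ${\rm PSL}(2,8)$ and the Suzuki groups. The sentence ``for $G\neq A_5$ one gets extra classes'' is precisely what has to be proved. Your fallback count $Con(M)-(\text{fusion})+2$, for $G$ properly containing a minimal simple $M$, does not close the gap either. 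Fusion in $G$ can only be controlled through subgroup orders, which leaves $|\pi_s(M)|$ classes; for $M\cong A_5$ this yields $9+2=11$, exactly the value you must exclude.

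The missing ingredient is Theorem A, a): a non-solvable group has at least three classes of maximal subgroups. Suppose a proper subgroup $H$ has a minimal simple section, so $|\pi_s(H)|\ge 9$ (to be checked on Thompson's list). Choose $H\le M_1$. By maximality, $M_2$ and $M_3$ are not conjugate to subgroups of $H$, so $Con(G)\ge 9+1+2=12$. This is the paper's ``$9+k(G)$'' count in Subcase~1.2. Together with a Dickson-type check that every minimal simple group other than $A_5$ has $Con\ge 12$, it would finish Step 3. Note that you only need a minimal simple \emph{section} of a proper non-solvable subgroup; the existence of a minimal simple \emph{subgroup} is a CFSG-dependent fact.

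Step 2 also needs an actual count, which is available:
\begin{itemize}
\item[(i)] Two simple factors $T_1,T_2$ in the socle give $|\pi_s(G)|\ge 2\cdot 9-1=17$.
\item[(ii)] In the almost simple case $T<G$, take the classes inside $T$, $[G]$, $[\langle g\rangle]$ with $g\notin T$, and $[N_G(P)]$ for a Sylow subgroup $P$ of $T$. The last is not inside $T$ by the Frattini argument and is non-cyclic by Burnside's transfer theorem, so these give at least $12$ classes.
\end{itemize}

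With these repairs your route (solvable radical, socle, minimal simple groups) would be a genuinely different proof from the paper's. The paper never reduces to simple groups; it splits on $k(G)$, using Belonogov's classification for $k(G)=3$. For $k(G)\ge 4$ it splits on the number of prime divisors and invokes Herzog's list, Theorem A, c) and Theorem C.
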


We observe that there are finite non-solvable groups $G$ with $Con(G)=12$, such as ${\rm SL}(2,5)$ or ${\rm PSL}(2,8)$. Clearly, the study can be extended to higher values of $Con(G)$.

Note that we are also able to completely classify finite groups $G$ with a small number of conjugacy classes of subgroups. For example:

\begin{itemize}
\item[-] If $Con(G)\leq 3$, then $G$ is cyclic of type $\mathbb{Z}_{p^i}$, where $p$ is a prime and $i=0,1,2$. 
\item[-] If $Con(G)=4=Con(S_3)$, then $G$ is either cyclic of type $\mathbb{Z}_{p^3}$ or $\mathbb{Z}_{pq}$, where $p\neq q$ are primes, or non-abelian of order $pq$, where $p,q$ are primes and $q\mid p-1$.
\item[-] If $Con(G)=5=Con(A_4)$, then $G$ is either abelian of type $\mathbb{Z}_2^2$ or $\mathbb{Z}_{p^4}$, where $p$ is a prime, or non-CLT of order $p^2q$, where $p,q$ are primes, $q$ is odd and $q\mid p+1$.
\end{itemize}

For the proof of our results, we need the following theorem (see Theorems 1, 2 of \cite{2}, Theorem 1.1 of \cite{3} and Exercise 7, Section 10.5, of \cite{16}).

\begin{thmy}
Given a finite group $G$, we denote by $k(G)$ and $k'(G)$ the numbers of conjugacy classes of maximal subgroups of $G$ and of non-normal maximal subgroups of $G$, respectively. The following hold:
\begin{itemize}
\item[{\rm a)}] If $k'(G)\leq 2$, then $G$ is solvable. In particular, a finite group $G$ with $k(G)\leq 2$ is always solvable.
\item[{\rm b)}] If $k(G)=3$, then $G$ is non-solvable if and only if either $G/\Phi(G)\cong {\rm PSL}(2,7)$ or $G/\Phi(G)\cong {\rm PSL}(2,2^p)$, where $p$ is a prime.
\item[{\rm c)}] If $k(G)=4$ and $G$ is simple, then it isomorphic to one of the groups ${\rm PSL}(2,11)$, ${\rm PSL}(2,p)$ {\rm(}$p$ is a prime,
$p>3$ and $p\equiv \pm 3, \pm 13\, (mod\, 40)${\rm)}, ${\rm PSL}(2,p^{r^m})$ {\rm(}$p$, $r$ are primes, $r>2$ for $p>2$, $m\in\mathbb{N}$ and $pm>2${\rm)}, ${\rm PSL}(3,3)$, $U_3(q)$ {\rm(}$q=3$ or $q=2^{2^m}$, $m\in\mathbb{N}${\rm)} or ${\rm Sz}(2^r)$ {\rm(}$r$ is an odd prime{\rm)}.
\item[{\rm d)}] If $G$ has an abelian maximal subgroup, then $G$ is solvable.
\end{itemize}
\end{thmy}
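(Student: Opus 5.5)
Theorem A collects known results rather than proving something new. My plan is therefore to reduce each item to its source, and to give a self-contained argument only for d), which is elementary.

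Items a), b) and c) I will not reprove. They are Theorems 1 and 2 of \cite{2} and Theorem 1.1 of \cite{3}, and they depend on the classification of finite simple groups together with the lists of maximal subgroups of the small-rank simple groups. The only thing to check is the ``in particular'' clause of a). It follows because a normal maximal subgroup forms a conjugacy class by itself, so every non-normal maximal class is one of the $k(G)$ classes and $k'(G)\le k(G)\le 2$. For the record, the route in \cite{2} is:
\begin{itemize}
\item pass to a minimal counterexample;
\item note that the hypothesis is inherited by quotients, since maximal subgroups of $G/N$ lift to maximal subgroups of $G$ containing $N$, with conjugacy and normality preserved;
\item reduce to the case where $G/\Phi(G)$ has a non-abelian simple section;
\item check against the known maximal subgroup lists that no non-abelian simple group has at most two classes of non-normal maximal subgroups.
\end{itemize}

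For d) I will give Herstein's argument directly. Let $M$ be an abelian maximal subgroup of $G$.
\begin{enumerate}
\item If $M\trianglelefteq G$, then $G/M$ has no proper non-trivial subgroups, so it is cyclic of prime order. Hence $G$ is solvable.
\item Otherwise $N_G(M)=M$. If $G$ is abelian there is nothing to prove, so assume it is not. Since $MZ(G)$ is abelian and contains $M$, maximality gives $Z(G)\le M$.
\item For $g\notin M$ and $x\in M\cap M^g$, the centralizer $C_G(x)$ contains both $M$ and $M^g$. These generate $G$, because $M\neq M^g$ and $M$ is maximal. Hence $x\in Z(G)$, so $M\cap M^g= Z(G)$ for all $g\notin M$.
\item Pass to $\bar G=G/Z(G)$. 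The subgroup $\bar M$ is abelian, self-normalizing and satisfies $\bar M\cap\bar M^{\bar g}=1$ for $\bar g\notin\bar M$. So $\bar G$ is a Frobenius group with complement $\bar M$, provided $\bar M\neq1$. If $\bar M=1$, then $M=Z(G)$ is maximal and central, so $G/Z(G)$ is cyclic and $G$ is abelian, which is the excluded case.
\item By Frobenius' theorem $\bar G$ has a normal kernel $K$ with $\bar G=K\rtimes\bar M$. By Thompson's theorem $K$ is nilpotent, and $\bar M$ is abelian, so $\bar G$ is solvable. Then $G$ is solvable, being a central extension of $\bar G$.
\end{enumerate}

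The main obstacle is invoking Frobenius' and Thompson's theorems in step 5. Both are deep, but they are standard and may be quoted. This is precisely the content of Exercise~7 in Section~10.5 of \cite{16}. The remaining care in d) lies in the degenerate cases of steps 2 and 4, where $G$ is abelian or $M=Z(G)$.
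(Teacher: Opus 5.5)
Your proposal is correct and matches what the paper does. The paper gives no proof of Theorem A at all: it cites Belonogov for a)--c) and Exercise 7 of Section 10.5 of Robinson for d). Your argument for d) is the intended solution of that exercise: you show $M\cap M^g=Z(G)$, so that $G/Z(G)$ is a Frobenius group with abelian complement $M/Z(G)$, and then apply Frobenius' and Thompson's theorems. Your deduction of the ``in particular'' clause of a) from $k'(G)\le k(G)$ is also right.

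If you want to avoid Thompson's theorem, there is a shorter route. Maximality of $\bar M$ means the kernel $K$ has no proper non-trivial $\bar M$-invariant subgroups. Coprime action gives an $\bar M$-invariant Sylow $p$-subgroup of $K$ for each prime $p$. Hence $K$ is a $p$-group.
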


We also need an immediate consequence of the well-known Zsigmondy's theorem.

\begin{thmy}
Let $a,b\in\mathbb{N}$ such that $\gcd(a, b)=1$ and $n\in\mathbb{N}$, $n>1$. Then there exists a prime divisor of $a^n+b^n$ that does not divide $a^k+b^k$ for all $k\in\{1, \dots, n-1\}$, except for the case $a=2$, $b=1$ and $n=3$.
\end{thmy}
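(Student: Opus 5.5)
The plan is to reduce the statement to the classical Zsigmondy theorem for differences $a^m-b^m$. That theorem says: if $a>b\geq 1$ are coprime and $m>1$, then $a^m-b^m$ has a \emph{primitive} prime divisor, meaning a prime dividing $a^m-b^m$ but no $a^k-b^k$ with $1\leq k<m$. The only exceptions are $m=2$ with $a+b$ a power of $2$, and $(a,b,m)=(2,1,6)$.

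First I normalise the data. The expression $a^n+b^n$ is symmetric in $a$ and $b$, so I may assume $a\geq b$. If $a=b$, then coprimality forces $a=b=1$. In that case $a^k+b^k=2$ for every $k$, and the statement fails trivially. This degenerate case has to be excluded, or read as part of the exceptional situation; I will note this explicitly. So from now on $a>b\geq 1$, and the exception $a=2$, $b=1$ should be understood up to swapping $a$ and $b$.

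The key step is to apply Zsigmondy with exponent $m=2n\geq 4$. Since $m\neq 2$, the first exceptional case cannot occur, and the second occurs only for $2n=6$, $a=2$, $b=1$, that is, exactly the excluded case $n=3$. Outside it, let $p$ be a primitive prime divisor of $a^{2n}-b^{2n}=(a^n-b^n)(a^n+b^n)$. By primitivity $p\nmid a^n-b^n$, because $n<2n$. Hence $p\mid a^n+b^n$.

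Now suppose $p\mid a^k+b^k$ for some $1\leq k\leq n-1$. Then $p$ divides $(a^k-b^k)(a^k+b^k)=a^{2k}-b^{2k}$ with $2k<2n$, which contradicts primitivity. So $p$ is the required prime.

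The only step needing care is the bookkeeping of the exceptional cases of Zsigmondy's theorem. I must check that the choice $m=2n$ avoids the $m=2$ exception, and that the sole surviving exception is $(2,1,3)$ together with its symmetric counterpart $(1,2,3)$. For completeness, in the exceptional case $9=2^3+1$ has only the prime $3$, which already divides $2+1$. Beyond this bookkeeping the argument is immediate from Zsigmondy's theorem.
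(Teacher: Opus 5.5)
Your argument is correct and matches the paper's approach: the paper gives no proof beyond calling the statement an immediate consequence of Zsigmondy's theorem, and your use of a primitive prime divisor of $a^{2n}-b^{2n}$ is the standard way to get the sum version from the difference version. Your remarks that $a=b=1$ must be excluded and that the exception should read $\{a,b\}=\{2,1\}$ are also correct, and they fix a small imprecision in the statement as written.
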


Another result that will be used below is the following description of the structure of finite non-solvable groups with a nilpotent maximal subgroup (see \cite{1}).

\begin{thmy}
Let $G$ be a finite non-solvable group with a nilpotent maximal subgroup. Then $G/F(G)$ has a unique minimal normal subgroup $H/F(G)$, $H/F(G)$ is a direct product of copies of a simple group with dihedral Sylow $2$-subgroups, and $G/H$ is a $2$-group.
\end{thmy}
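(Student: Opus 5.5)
We would argue by passing to $\bar G=G/F(G)$, analysing a minimal normal subgroup there, and reducing the deep part to known theorems on simple groups with a nilpotent maximal subgroup.

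Let $M$ be a nilpotent maximal subgroup of $G$. First we show $F(G)\le M$. Otherwise $G=MF(G)$, so $G/F(G)\cong M/(M\cap F(G))$ is nilpotent, and $G$ would be solvable. Put $\bar G=G/F(G)$ and $\bar M=M/F(G)$, which is a nilpotent maximal subgroup of $\bar G$.

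Next we show $\bar M$ is core-free. Let $\bar N$ be any nontrivial normal subgroup of $\bar G$ and $N$ its preimage. If $\bar N\le\bar M$, then $N\le M$ is a nilpotent normal subgroup of $G$, so $N\le F(G)$, a contradiction. Hence every minimal normal subgroup $\bar N$ of $\bar G$ satisfies $\bar G=\bar N\bar M$.

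Now we prove uniqueness of the minimal normal subgroup. Suppose $\bar N_1\neq\bar N_2$ are minimal normal. They commute, so $\bar N_2$ embeds in $\bar G/\bar N_1\cong\bar M/(\bar M\cap\bar N_1)$, which is nilpotent. Thus $\bar N_2$ is nilpotent, and so is $\bar N_1$ by symmetry. Then $\bar N_1\bar N_2$ is a nilpotent normal subgroup of $\bar G$, and we aim to contradict its intersection with $\bar M$. The cleaner route is this: the preimage of $\bar N_1\bar N_2$ is solvable, and iterating the argument shows that $\bar G$ would have a solvable socle complemented by the nilpotent $\bar M$; repeating along a chief series then forces $G$ solvable. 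So $\bar G$ has a unique minimal normal subgroup $\bar H=H/F(G)$.

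This $\bar H$ cannot be abelian. If it were, then $\bar G/\bar H$ is nilpotent and $\bar H$ is solvable, so $G$ would be solvable. Hence $\bar H\cong T^k$ with $T$ nonabelian simple, and $\bar M\cap\bar H$ is a nilpotent subgroup of $\bar H$ normalized by $\bar M$ with $\bar G=\bar H\bar M$.

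The main step is to show that $T$ has dihedral Sylow $2$-subgroups. First, Thompson's theorem (a group with a nilpotent maximal subgroup of odd order is solvable) shows that $\bar M\cap\bar H$ must contain a Sylow $2$-subgroup of $\bar H$. Projecting to a component $T$ gives a nilpotent subgroup of $T$ that contains a Sylow $2$-subgroup and is "maximal modulo the action" of $\bar M$. Then we would invoke Baumann's/Rose's analysis: a nilpotent maximal subgroup of a nonsolvable group has Sylow $2$-subgroup of class at least $3$, and the associated simple sections have dihedral Sylow $2$-subgroups. This identifies $T$ among ${\rm PSL}(2,q)$ and $A_7$ via Gorenstein--Walter and checks each case. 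I expect this step, which needs classification-type input, to be the main obstacle.

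Finally, $\bar G/\bar H\cong\bar M/(\bar M\cap\bar H)$ is nilpotent. Its odd part would act on $\bar H$ normalizing $\bar M\cap\bar H$. Using the structure of $\mathrm{Out}(T)$ for these $T$ and the maximality of $\bar M$, we would show that any odd-order part yields a larger proper subgroup or forces solvability. Hence $G/H$ is a $2$-group.
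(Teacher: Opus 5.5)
The paper gives no proof of this statement; it quotes it as a black box from Baumann's 1976 paper. Your opening reduction is correct: $F(G)\le M$, $\bar M$ is a core-free maximal subgroup of $\bar G$, and every minimal normal subgroup $\bar N$ of $\bar G$ satisfies $\bar G=\bar N\bar M$, so $\bar G/\bar N$ is nilpotent. This gives uniqueness and non-abelianness in one line. A solvable $\bar N$ would make $\bar G$ solvable, and a second minimal normal subgroup would embed in the nilpotent group $\bar G/\bar N_1$. Your detour through ``iterating along a chief series'' is not needed. So you legitimately reach $\bar H\cong T^k$ with $T$ non-abelian simple, $\bar G=\bar H\bar M$ and $\bar G/\bar H$ nilpotent. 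That is the routine part.

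The genuine gap is everything after that, which is where the whole content of the theorem lies.

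\begin{itemize}
\item \emph{Sylow claim.} Thompson's theorem applied to $\bar G$ only tells you that $\bar M$ has even order. It does not give that $\bar M\cap\bar H$ contains a Sylow $2$-subgroup of $\bar H$; that needs a separate and considerably deeper argument.
\item \emph{Reduction to $T$.} $\bar M\cap\bar H$ is in general not maximal in $\bar H$, only maximal among $\bar M$-invariant proper subgroups. For example, in ${\rm PGL}(2,7)$ the maximal subgroup $D_{16}$ meets ${\rm PSL}(2,7)$ in $D_8<S_4$. Its projection to a factor $T$ is controlled even less. Before anything can be said about $T$, you need an analysis of how a nilpotent maximal subgroup can meet $T^k$: how $\bar M$ permutes the factors, and diagonal versus product configurations. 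Your phrase ``maximal modulo the action'' skips exactly this.
\item \emph{Circularity.} The dihedral conclusion is obtained by ``invoking Baumann's/Rose's analysis''. That analysis is precisely the statement you are asked to prove, so the argument is circular at its crucial step. Even granting that the Sylow $2$-subgroup of $M$ has class at least $3$, this does not by itself force $T$ to have dihedral Sylow $2$-subgroups. Gorenstein--Walter only becomes available once dihedrality is known.
\item \emph{Last step.} That $G/H$ is a $2$-group is stated only as an intention (``we would show''). It also rests on the unproved identification of $T$.
\end{itemize}

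As written, the proposal establishes only the reduction to a group with a unique non-abelian minimal normal subgroup, and defers the actual theorem to its own source.
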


Finally, we remark that 
\begin{equation}
Con(G)=\frac{1}{|G|}\sum_{H\leq G}|N_G(H)|\nonumber
\end{equation}and so this function is related to the function
\begin{equation}
\sigma_1(G)=\frac{1}{|G|}\sum_{H\leq G}|H|\nonumber
\end{equation}studied in our previous papers \cite{14,17}. This connection suggests the following question.
\bigskip

\noindent{\bf Open problem.} Study other functions of type
\begin{equation}
f(G)=\frac{1}{|G|}\sum_{H\leq G}|g(H)|,\nonumber
\end{equation}where $g:L(G)\longrightarrow L(G)$ is a fixed function; for example, $g(H)=C_G(H)$ or $g(H)=Core_G(H)$, $\forall\, H\leq G$.
\bigskip

Most of our notation is standard and will not be repeated here. Basic definitions and results on groups can be found in \cite{16}.

\section{Proofs of the main results}

We start with an elementary lemma that collects some basic properties of the function $Con$.

\begin{lemma}
    The following hold:
\begin{itemize}
\item[{\rm a)}] If $G$ is a finite group, then $Con(G)\geq|\pi_s(G)|$, where $\pi_s(G)=\{|H|\mid H\leq G\}$.
\item[{\rm b)}] If $G$ is a finite group and $H$ is a normal subgroup of $G$, then $Con(G)\geq Con(G/H)$. Moreover, we have equality if and only if $H=1$. 
\item[{\rm c)}] The function $Con$ is multiplicative, that is if $G_1$ and $G_2$ are finite groups of coprime orders, then $Con(G_1\times G_2)=Con(G_1)Con(G_2)$. In par\-ti\-cu\-lar, if $G$ is a finite nilpotent group of order $p_1^{n_1}\cdots p_r^{n_r}$, then $Con(G)=\prod_{i=1}^rCon(P_i)$, where $P_i$ is a Sylow $p_i$-subgroup of $G$, $\forall\, i=1, \dots, r$.
\end{itemize}
\end{lemma}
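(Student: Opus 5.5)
The three parts are elementary and I will treat them separately, using only the definition of $Con(G)$ as the number of conjugacy classes of subgroups of $G$.

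For a), the plan is to observe that conjugate subgroups have the same order. Hence the map sending a conjugacy class of subgroups to the common order of its members is well defined. By definition of $\pi_s(G)$, this map is onto $\pi_s(G)$. The inequality $Con(G)\geq|\pi_s(G)|$ follows at once.

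For b), I will use the correspondence theorem. Subgroups of $G/H$ are exactly the $K/H$ with $H\leq K\leq G$. Moreover, $K_1/H$ and $K_2/H$ are conjugate in $G/H$ if and only if $K_1$ and $K_2$ are conjugate in $G$, because $(gH)(K_1/H)(gH)^{-1}=gK_1g^{-1}/H$. So the conjugacy classes of subgroups of $G/H$ correspond bijectively to the classes of subgroups of $G$ containing $H$. This is the set of classes of $G$ whose members contain $H$; since $H$ is normal, this condition is class-invariant. This gives $Con(G)\geq Con(G/H)$. If $H\neq 1$, the class of the trivial subgroup $\{1\}$ is not among these classes, so the inequality is strict. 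If $H=1$, then $G/H\cong G$ and equality is trivial.

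For c), the key step is that when $\gcd(|G_1|,|G_2|)=1$, every subgroup $K$ of $G_1\times G_2$ has the form $K_1\times K_2$ with $K_i\leq G_i$. To see this, write $|G_1|=m$ and $|G_2|=n$. For $(a,b)\in K$, choose integers $u,v$ with $um+vn=1$. Then $(a,b)^{vn}=(a^{vn},1)=(a,1)$, since $a^{um}=1$. Hence the projections of $K$ lie in $K$, and $K=\pi_1(K)\times\pi_2(K)$. Next, $(g_1,g_2)$ conjugates $K_1\times K_2$ to $K_1^{g_1}\times K_2^{g_2}$. Therefore $K_1\times K_2$ and $L_1\times L_2$ are conjugate if and only if $K_i$ and $L_i$ are conjugate in $G_i$ for $i=1,2$. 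This gives a bijection between classes in $G_1\times G_2$ and pairs of classes, so $Con(G_1\times G_2)=Con(G_1)Con(G_2)$.

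The nilpotent case follows by induction on $r$, since a finite nilpotent group is the direct product of its Sylow subgroups, which have pairwise coprime orders.

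The only step needing any care is the decomposition of subgroups of a coprime direct product in c). The argument above handles it, so I expect no real obstacle.
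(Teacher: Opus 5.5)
Your proof is correct in all three parts, including both directions of the equality case in b) (the class of $1$ witnesses strictness when $H\neq 1$) and the splitting of subgroups of a coprime direct product via $(a,b)^{vn}=(a,1)$ in c). The paper gives no proof of this lemma, calling it elementary, and your arguments (classes surjecting onto orders, the correspondence theorem, the coprime splitting) are the standard verifications it leaves to the reader.
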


Next, we present an arithmetic result that will be used in the following.

\begin{lemma}
Let $p$ be a prime. Then $|{\rm PSL}(2,2^p)|$ is divisible by exactly three primes if and only if either $p=2$ or $p=3$.
\end{lemma}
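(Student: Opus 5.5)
We start from the factorization
\[
|{\rm PSL}(2,2^p)|=2^p(2^p-1)(2^p+1)=2^p(2^p-1)(2^{2p}-1)/(2^p-1),
\]
whose three factors $2^p$, $2^p-1$, $2^p+1$ are pairwise coprime (the last two are odd and differ by $2$). So the set of prime divisors is $\{2\}$ together with the primes of $2^p-1$ and of $2^p+1$. The claim is therefore that $2^p-1$ and $2^p+1$ together contribute exactly two primes if and only if $p\in\{2,3\}$.

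The ``if'' direction is a direct check. For $p=2$ we get $|{\rm PSL}(2,4)|=60=2^2\cdot 3\cdot 5$. For $p=3$ we get $|{\rm PSL}(2,8)|=504=2^3\cdot 3^2\cdot 7$. In both cases there are exactly three primes.

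For the ``only if'' direction, let $p\geq 5$, so $p$ is an odd prime.
\begin{itemize}
\item $2^p-1>1$ is odd, so it has at least one prime divisor $r$.
\item Since $p$ is odd, $3\mid 2^p+1$.
\item Apply Theorem B with $a=2$, $b=1$, $n=p$; the exceptional case $n=3$ is excluded because $p\geq5$. This gives a prime $s\mid 2^p+1$ with $s\nmid 2^k+1$ for all $1\le k\le p-1$. In particular, taking $k=1$ gives $s\nmid 3$, so $s\neq 3$.
\end{itemize}
Since $2^p-1$ and $2^p+1$ are coprime, $r$ is distinct from both $3$ and $s$. Hence $2,3,r,s$ are four distinct primes dividing $|{\rm PSL}(2,2^p)|$, which contradicts the hypothesis of exactly three primes.

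The only point needing care is making sure the primes obtained are distinct. Coprimality of the three factors separates $2$, $r$ and the primes of $2^p+1$, and the $k=1$ case of Theorem B separates $s$ from $3$. The only exclusion Theorem B imposes is $n=3$, and that value is handled by the direct check above.
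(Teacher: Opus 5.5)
Your proof is correct and follows essentially the same route as the paper. Both use the coprime factorization $2^p(2^p-1)(2^p+1)$, the divisibility $3\mid 2^p+1$ for odd $p$, and Theorem B (with $k=1$) to produce a prime $s\neq 3$ dividing $2^p+1$; your count of four distinct primes $2,3,r,s$ is just a more explicit version of the paper's observation that $2^p-1$ and $2^p+1$ would both have to be prime powers.
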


\begin{proof}
We have $|{\rm PSL}(2,2^p)|=2^p(2^p-1)(2^p+1)$ and this number is divisible by exactly three primes if and only if both $2^p-1$ and $2^p+1$ are prime powers. This holds for $p=2$ and $p=3$. If $p\geq 5$, then $p$ is odd and so $2^p+1$ is divisible by $3$. On the other hand, in this case there exists a prime divisor $q$ of $2^p+1$ that does not divide $2^k+1$ for all $k\in\{1, \dots, p-1\}$ by Theorem B. Then $q\nmid 3$, that is $q\neq 3$, implying that $2^p+1$ cannot be a prime power. Thus the unique possibilities are $p=2$ and $p=3$.\qedhere
\end{proof}

Note that a complete classification of finite simple groups whose order is divisible by exactly three primes follows from \cite{8}. Such a group is one of the following groups:
\begin{itemize}
\item[-] ${\rm PSL}(2,4)\cong A_5$ (of order $60=2^2\cdot 3\cdot 5$),
\item[-] ${\rm PSL}(2,7)$ (of order $168=2^3\cdot 3\cdot 7$),\newpage
\item[-] $A_6$ (of order $360=2^3\cdot 3^2\cdot 5$),
\item[-] ${\rm PSL}(2,8)$ (of order $504=2^3\cdot 3^2\cdot 7$),
\item[-] ${\rm PSL}(2,17)$ (of order $2448=2^4\cdot 3^2\cdot 17$),
\item[-] ${\rm PSL}(3,3)$ (of order $5616=2^4\cdot 3^3\cdot 13$),
\item[-] $U_3(3)$ (of order $6048=2^5\cdot 3^3\cdot 7$),
\item[-] $U_4(2)$ (of order $25920=2^6\cdot 3^4\cdot 5$).
\end{itemize}

We are now able to prove our main results.

\bigskip\noindent{\bf Proof of Theorem 1.1.} Assume that $Con(G)<9$, but $G$ is not sol\-va\-ble. Let $|G|=p_1^{n_1}\cdots p_r^{n_r}$ with $p_1, \dots, p_r$ distinct primes and denote by $P_i$ a Sylow $p_i$-subgroup of $G$, $\forall\, i=1, \dots, r$. Then $r\geq 3$.

If at least two Sylow subgroups of $G$ are maximal subgroups, say $P_1$ and $P_2$, then they cannot be abelian by Theorem A, d), and so $n_1,n_2\geq 3$. It follows that $G$ contains at least nine conjugacy classes of subgroups of orders $1$, $|G|$, $p_3^{n_3}$ and $p_i$, $p_i^2$, $p_i^3$, $i=1,2$, a contradiction. If a unique Sylow subgroup of $G$ is a maximal subgroup, say $P_1$, then $n_1\geq 3$. It follows that $G$ contains at least seven conjugacy classes of subgroups of orders $1$, $|G|$, $p_1$, $p_1^2$, $p_1^3$, $p_2^{n_2}$ and $p_3^{n_3}$. Since, by Theorem A, a), $G$ has at least two conjugacy classes of maximal subgroups different from that determined by $P_1$, we get $Con(G)\geq 7+2=9$, a contradiction. Thus no Sylow subgroup of $G$ is maximal.

Therefore $G$ contains at least eight conjugacy classes of subgroups: three of Sylow subgroups, three of maximal subgroups and two of $1$ and $G$. But $Con(G)\leq 8$, which implies that $k'(G)=k(G)=r=3$. Moreover, the Sylow $p$-subgroups of $G$ cannot have proper subgroups and so $n_i=1$, $i=1,2,3$. Then $G$ is of square-free order and consequently it is solvable, a contradiction. The proof of Theorem 1.1 is now complete.\qed

\bigskip\noindent{\bf Proof of Theorem 1.2.} Assume that $Con(G)=9$. We will prove that $k(G)=3$.

Since $G$ is not solvable, its order is divisible by at least three primes. Also, we have $k(G)\geq 3$ by Theorem A, a). Assume that $k(G)\geq 4$. As in the proof of Theorem 1.1, the Sylow subgroups of $G$ cannot be maximal subgroups, which shows that $G$ contains at least nine conjugacy classes of subgroups: three of Sylow subgroups, four of maximal subgroups and two of $1$ and $G$. But $Con(G)=9$ and so these classes are all conjugacy classes of subgroups of $G$. It follows that the Sylow subgroups of $G$ must be cyclic of prime order and therefore $G$ must be solvable because its order is square-free. This contradicts our assumption.

Since $k(G)=3$, it follows that $G/\Phi(G)\cong {\rm PSL}(2,7)$ or $G/\Phi(G)\cong {\rm PSL}(2,2^p)$, where $p$ is a prime, by Theorem A, b). In the first case Lemma 2.1, b), leads to
\begin{equation}
9=Con(G)\geq Con(G/\Phi(G))=Con({\rm PSL}(2,7))=15,\nonumber
\end{equation}a contradiction. Since $|G|$ and $|G/\Phi(G)|$ have the same prime factors, in the second case we obtain that $|{\rm PSL}(2,2^p)|$ is divisible by exactly three primes. Then Lemma 2.2 shows that either $p=2$ or $p=3$. If $p=2$, we have ${\rm PSL}(2,4)\cong A_5$ and
\begin{equation}
9=Con(G)\geq Con(G/\Phi(G))=Con(A_5)=9,\nonumber
\end{equation}implying that $\Phi(G)=1$ and $G\cong A_5$. If $p=3$, we have
\begin{equation}
9=Con(G)\geq Con(G/\Phi(G))=Con({\rm PSL}(2,8))=12,\nonumber
\end{equation}a contradiction. 

Thus the unique possibility is $G\cong A_5$, as desired.\qed

\bigskip\noindent{\bf Proof of Theorem 1.3.} Assume that $Con(G)\in\{10, 11\}$, but $G$ is not solvable. Denote by $[M_1], \dots, [M_{k(G)}]$ the conjugacy classes of maximal subgroups of $G$, where $k(G)\geq 3$ from Theorem A, a). Let $|G|=p_1^{n_1}\cdots p_r^{n_r}$ with $p_1, \dots, p_r$ distinct primes and let $P_i$ be a Sylow $p_i$-subgroup of $G$, $\forall\, i=1, \dots, r$. Then $r\geq 3$. We may assume that $n_1\geq 2$, and even that $n_1\geq 3$ if $P_1$ is a maximal subgroup of $G$ by Theorem A, d). We distinguish the following cases:

\medskip

\hspace{5mm}{\bf Case 1.} $k(G)\geq 4$\\
We have two possibilities:

\medskip

\hspace{10mm}{\bf Subcase 1.1.} $r\geq 4$\\ 
Suppose that at least one of the inequalities $k(G)\geq 4$ and $r\geq 4$ is strict. Then $k(G)+r\geq 9$ and therefore $G$ has at least $12$ conjugacy classes of subgroups
\begin{itemize}
\item[-] $[G]$, $[M_1]$, \dots, $[M_{k(G)}]$, $[P_1]$, \dots, $[P_r]$, $[R_1]$ and $[1]$, where $R_1\leq P_1$ with $|R_1|=p_1$, if $P_1$ is not a maximal subgroup of $G$,
\item[-] $[G]$, $[M_1]$, \dots, $[M_{k(G)}]$, $[P_2]$, \dots, $[P_r]$, $[Q_1]$, $[R_1]$, and $[1]$, where $R_1,Q_1\leq P_1$ with $|R_1|=p_1$ and $|Q_1|=p_1^2$, if $P_1$ is a maximal subgroup of $G$, say $P_1=M_1$,
\end{itemize}a contradiction. Thus $k(G)=r=4$ and it follows that $G$ has exactly $11$ conjugacy classes of subgroups, namely
\begin{itemize}
\item[-] $[G]$, $[M_1]$, \dots, $[M_4]$, $[P_1]$, \dots, $[P_4]$, $[R_1]$ and $[1]$, where $R_1\leq P_1$ with $|R_1|=p_1$, if $P_1$ is not a maximal subgroup of $G$,
\item[-] $[G]$, $[M_1]$, \dots, $[M_4]$, $[P_2]$, \dots, $[P_4]$, $[Q_1]$, $[R_1]$, and $[1]$, where $R_1,Q_1\leq P_1$ with $|R_1|=p_1$ and $|Q_1|=p_1^2$, if $P_1$ is a maximal subgroup of $G$, say $P_1=M_1$.
\end{itemize}This shows that either $|G|=p_1^2p_2p_3p_4$ or $|G|=p_1^3p_2p_3p_4$. Clearly, we have $p_1=2$. Moreover, in the second situation we have $P_1\cong Q_8$ because $P_1$ is non-abelian and $D_8$ possesses two types of subgroups of order $4$, and this contradicts Theorem C. In the first situation, assume that $G$ has a proper non-trivial normal subgroup $H$. Then one of the groups $H$ and $G/H$ must be non-solvable and so it must be a simple group of order $4uv$, where $u,v\in\{p_2,p_3,p_4\}$ are distinct. Since $A_5$ is the unique such group, it follows that either $H\cong A_5$ or $G/H\cong A_5$. In both cases we obtain $G\cong A_5\times C_w$ with $w\in\{p_2,p_3,p_4\}$, implying that $Con(G)=18$, a contradiction. 

Thus $G$ is a simple group and consequently it is one of the groups in Theorem A, c). Among these groups, the only ones having a Sylow $2$-subgroup isomorphic to $\mathbb{Z}_2^2$ are ${\rm PSL}(2,p)$, where $p>3$ is a prime sa\-tis\-fy\-ing $p\equiv 3, 13\, ({\rm mod}\, 40)$. Then $3$ divides the order of $G$ and we may assume that $p_2=3$. Since $|{\rm PSL}(2,p)|=\frac{p(p^2-1)}{2}$\,, we may also assume that $p=p_3$. It follows that $p_3^2-1=24p_4$, an equation which has the unique solution $p_3=11$ and $p_4=5$. Thus $G\cong{\rm PSL}(2,11)$, implying that $Con(G)=16$, a contradiction.

\medskip

\hspace{10mm}{\bf Subcase 1.2.} $r=3$\\ 
Then $G$ has a section $S=H/K$ isomorphic to one of the groups $A_5$, ${\rm PSL}(2,7)$, $A_6$, ${\rm PSL}(2,8)$, ${\rm PSL}(2,17)$, ${\rm PSL}(3,3)$, $U_3(3)$ or $U_4(2)$. Since
\begin{equation}
|\pi_s(S)|\leq|\pi_s(G)|\leq Con(G)\leq 11,\nonumber
\end{equation}it is easy to see that the unique possibility is $S\cong A_5$, in which case we have $|\pi_s(S)|=9$.

If $H\neq G$, then $H$ is contained in a maximal subgroup of $G$, say $H\leq M_1$. It follows that $G$ has at least $9+k(G)\geq 13$ conjugacy classes of subgroups: the $9$ that are contained in $[H]$ and $[G]$, $[M_2]$, \dots, $[M_{k(G)}]$, a contradiction. 

If $H=G$, then $[K]\neq [1]$ because $G\not\cong A_5$. Also, $[K]$ is contained in exactly $3$ conjugacy classes of maximal subgroups of $G$, say $[M_1]$, $[M_2]$ and $[M_3]$. On the other hand, from $r=3$ we infer that $[K]$ is not contained in at least $2$ conjugacy classes of minimal subgroups of $G$, say $[R_1]$ and $[R_2]$. Thus 
\begin{equation}
Con(G)\geq 9+1+(k(G)-3)+2\geq 13,\nonumber 
\end{equation}contradicting again our hypothesis.

\medskip

\hspace{5mm}{\bf Case 2.} $k(G)=3$\\ 
By Theorem A, b), we have either $G/\Phi(G)\cong {\rm PSL}(2,7)$ or $G/\Phi(G)\cong {\rm PSL}(2,2^p)$, where $p$ is a prime. If $G/\Phi(G)\cong {\rm PSL}(2,7)$, then Lemma 2.1, b), implies that
\begin{equation}
Con(G)\geq Con(G/\Phi(G))=15,\nonumber 
\end{equation}a contradiction. Assume that $G/\Phi(G)\cong {\rm PSL}(2,2^p)$ with $p$ prime. Since ${\rm PSL}(2,2^p)$ possesses at least $7+p$ conjugacy classes of subgroups, namely $[{\rm PSL}(2,2^p)]$, $[D_{2(2^p-1)}]$, $[D_{2(2^p+1)}]$, $[\mathbb{Z}_2^p\rtimes\mathbb{Z}_{2^p-1}]$, $[\mathbb{Z}_{2^p-1}]$, $[\mathbb{Z}_{2^p+1}]$, $[\mathbb{Z}_2^p]$, $[\mathbb{Z}_2^{p-1}]$, \dots, $[\mathbb{Z}_2]$ and $[1]$, we get
\begin{equation}
11\geq Con(G)\geq Con(G/\Phi(G))\geq 7+p,\nonumber 
\end{equation}and so $p=2$ or $p=3$. Note that the second situation cannot occur because $Con({\rm PSL}(2,8))=12$. Thus $p=2$, that is $G/\Phi(G)\cong A_5$. Then $\Phi(G)\neq 1$ and $r=3$, implying that $G$ has at least $12$ conjugacy classes of subgroups: the $9$ that contain $[\Phi(G)]$, two conjugacy classes of minimal subgroups that does not contain $[\Phi(G)]$ and $[1]$, a contradiction.

This completes the proof.\qed
\bigskip

Note that the proof of Theorem 1.3 also reveals two finite non-solvable groups $G$ with $Con(G)=12$, namely ${\rm SL}(2,5)$ and ${\rm PSL}(2,8)$. In both cases we have $k(G)=3$ and $r=3$.

\bigskip\noindent{\bf Acknowledgements.} The author is grateful to the reviewer for remarks which improve the previous version of the paper.

\vspace*{5ex}\small

\hfill
\begin{minipage}[t]{5cm}
Marius T\u arn\u auceanu \\
Faculty of  Mathematics \\
``Al.I. Cuza'' University \\
Ia\c si, Romania \\
e-mail: {\tt tarnauc@uaic.ro}
\end{minipage}


\begin{thebibliography}{10}
\bibitem{1} B. Baumann, {\it Endliche nichtaufl\"osbare Gruppen mit einer nilpotenten maximalen Untergruppe}, J. Algebra {\bf 38} (1976), 119-135.
\bibitem{2} V.A. Belonogov, {\it Finite groups with three classes of maximal subgroups}, Math. Sb. {\bf 131} (1986), 225-239.
\bibitem{3} V.A. Belonogov, {\it Finite groups with three classes of maximal subgroups}, Math. Sb. {\bf 131} (1986), 225-239.
\bibitem{4} A. Betz and D.A. Nash, {\it Classifying groups with a small number of subgroups}, Amer. Math. Monthly {\bf 129} (2022), 255-267.
\bibitem{5} A. Betz and D.A. Nash, {\it A note on abelian groups with fewer than $50$ subgroups}, 2020, http://dx.doi.org/10.13140/RG.2.2.24909.87520.
\bibitem{6} R. Brandl and L. Verardi, {\it Finite groups with few conjugacy classes of subgroups}, Rend. Semin. Mat. Univ. Padova {\bf 87} (1992), 267-280.
\bibitem{7} A. Das and A. Mandal, {\it Solvability of a group based on its number of subgroups}, Comm. Algebra {\bf 54} (2026), 1476-1491.
\bibitem{8} M. Herzog, {\it On finite simple groups of order divisible by three prime numbers}, J. Algebra {\bf 10} (1968), 384-388.
\bibitem{9} M. Herzog, P. Longobardi and M. Maj, {\it New criteria for solvability, nilpotency and other properties of finite groups in terms of the order elements or subgroups}, Int. J. Group Theory {\bf 12} (2023), 35-44.
\bibitem{10} B. H\"{o}fling, {\it On the number of conjugacy classes of maximal subgroups in a finite soluble group}, Arch. Math. {\bf 72} (1999), 1-8.
\bibitem{11} L. Li and H. Qu, {\it The number of conjugacy classes of non-normal subgroups of finite $p$-groups}, J. Algebra {\bf 466} (2016), 44-62.
\bibitem{12} J. Lu, L. Pang and Y. Qiu, {\it Finite groups with few non-normal subgroups}, J. Algebra Appl. {\bf 14} (2015), article ID 1550057.
\bibitem{13} A. Mednykh, {\it Counting conjugacy classes of subgroups in a finitely generated group}, J. Algebra {\bf 320} (2008), 2209-2217.
\bibitem{14} T. De Medts and M. T\u arn\u auceanu, {\it Finite groups determined by an ine\-quality of the orders of their subgroups}, Bull. Belg. Math. Soc. Simon Stevin {\bf 15} (2008), 699-704.
\bibitem{15} W. Meng and S. Li, {\it Finite groups with few conjugacy classes of non-cyclic subgroups}, Sci. Sinica Math. {\bf 44} (2014), 939-944.
\bibitem{16} D.J.S. Robinson, {\it A course in the theory of groups} (second edition), Springer-Verlag, New York, 1996.
\bibitem{17} M. T\u arn\u auceanu, {\it Finite groups determined by an inequality of the orders of their subgroups} II, Comm. Algebra {\bf 45} (2017), 4865-4868.
\end{thebibliography}
\end{document}